\newtheorem{theorem}{Theorem}[section]
\newtheorem{lemma}{Lemma}[section]
\newtheorem{remark}{Remark}[section]
\numberwithin{equation}{section}
\newtheorem{ conjecture}{ Conjecture}[section]
\journal{ Journal of Function Spaces }
\begin{document}

\begin{frontmatter}



\title{ Cyclic inequalities  involving Cater cyclic function }


\author{JiaJin Wen}\address{ College of Mathematics and Computer Science,
 Chengdu University, Chengdu, Sichuan, 610106, P. R. China.\\ wenjiajin623@163.com}

\author{ TianYong Han}\address{ College of Mathematics and Computer Science,
 Chengdu University, Chengdu, Sichuan, 610106, P. R. China.\\ hantian123\_123@163.com}

 \author{Jun Yuan}\address{(Corresponding author)}\address{ School of Information Engineering, Nanjing Xiaozhuang  University, Nanjing, Jiangsu, 211171,  P. R. China.\\ yuanjun@njxzc.edu.cn }

\begin{abstract}
By means of the mathematical analysis theory,  inequality theory, mathematical induction and the dimension reduction method, under the
proper hypotheses, we establish the following cyclic inequalities:
\[\sum_{i=1}^{n} {a_i^{{a_{n+1-i}}}}\leq\sum_{\text{cyc}:~n}^{1\leq i\leq n} {a_i^{{a_{i + 1}}}}\leq \sum_{i=1}^{n} {a_i^{{a_i}}},~\forall n\geq2.\]
\end{abstract}

\begin{keyword}  cyclic function \sep Cater cyclic function \sep Cater inequality \sep mean.



\MSC[2008]{26D15, 26E60  }
\end{keyword}
\end{frontmatter}


\section{Introduction}
We will use the following  hypotheses and notations throughout the paper:
\[\mathbb{N} \triangleq \left\{ {1, 2, \ldots, m, \ldots } \right\},~\mathbb{Z} \triangleq \left\{ {0, \pm 1, \ldots, \pm m, \ldots } \right\},\]
\[\mathbb{R} \triangleq \left( { - \infty ,\infty } \right),~\left( {{a_1}, {a_2}, \cdots, {a_n}} \right) \in {\mathbb{R}^n},~E_{n} \subseteq {\mathbb{R}^n},\;n \in \mathbb{N},~n\geq 2.\]

If the function  $f:E_{n}\to \mathbb{R}$ satisfy the conditions:
\begin{equation*}
f\left( {{a_{k + 1}},{a_{k + 2}}, \cdots, {a_{k + n}}} \right) = f\left( {{a_1},{a_2}, \cdots, {a_n}} \right),\;\forall k \in \mathbb{Z},
\end{equation*}
 where
\begin{equation}\label{1.}
  {a_k} = {a_i} \Leftrightarrow k \equiv i\;\left( {\bmod \;n} \right),\;\forall k \in \mathbb{Z},\;i = 1, 2, \ldots, n,
\end{equation}
then we say that $f$ is a \textbf{cyclic  function}.

According to the above definition, we know that, under the hypotheses in (\ref{1.}), for any function $\chi :E_{m} \to \mathbb{R}, ~2\leq m\leq n$, the function
\[f:E_{n} \to \mathbb{R},\;f\left( {{a_1},{a_2},\cdots, {a_n}} \right)  \triangleq \sum_{i = 1}^{n } {\chi \left( {{a_{i }},{a_{i + 1}}, \cdots ,{a_{i + m-1}}} \right)}=\sum_{\text{cyc}:~n}^{1\leq i\leq n} {\chi \left( {{a_{i }},{a_{i + 1}}, \cdots, {a_{i + m-1}}} \right)}\]
is a  cyclic  function, where and in the future, $\sum_{\text{cyc}:~n}^{1\leq i\leq n}$ represent the \textbf{cyclic summation}. For example,
\[\sum_{\text{cyc}:~n}^{1\leq i\leq n} {a_i^{{a_{i + 1}}}}={ \sum_{i=1}^{n-1} {a_i^{{a_{i+1}}}}+a_n^{{a_{n+1}}}}={ \sum_{i=1}^{n-1} {a_i^{{a_{i+1}}}}+a_n^{{a_1}}},~\forall n\geq 2.\]

In particular, the function
$f\left( {{a_1},{a_2},\cdots, {a_n}} \right)  \triangleq \sum_{i = 1}^{n }a_{i}$ is a cyclic  function. In general, if $f:E_{n} \to \mathbb{R}$ is a symmetric function, then $f$ is a cyclic  function.

Let $\left( {{a_1},{a_2},\cdots, {a_n}} \right)\in(0,\infty)^{n},~n\geq2.$ Then we say that the function
\begin{equation*}  \text{C}:(0,\infty)^{n}\rightarrow \mathbb{R},~\text{C}\left( {{a_1}, {a_2}, \cdots, {a_{n}}} \right)\triangleq\sum_{\text{cyc}:~n}^{1\leq i\leq n} {a_i^{{a_{i + 1}}}} \end{equation*}
is a \textbf{Cater cyclic function} which is a cyclic function,  and
\[\text{C}_{*}:(0,\infty)^{n}\rightarrow \mathbb{R},~\text{C}_{*}\left( {{a_1}, {a_2}, \cdots, {a_{n}}} \right)\triangleq\sum_{i=1}^{n} {a_i^{{a_{i}}}}\]
with
\[\text{C}^{*}:(0,\infty)^{n}\rightarrow \mathbb{R},~\text{C}^{*}\left( {{a_1}, {a_2}, \cdots, {a_{n}}} \right)\triangleq\sum_{i=1}^{n} {a_i^{{a_{n+1-i}}}}\]
are \textbf{Cater-type cyclic functions} which are also the cyclic functions.

Assume that $f:E_{n} \to \mathbb{R}$ and $g:E_{n} \to \mathbb{R}$ are two cyclic  functions. Then we say that the inequalities
\begin{equation*}  f\left( {{a_1}, {a_2},\cdots, {a_n}} \right) \geq g\left( {{a_1}, {a_2},\cdots, {a_n}} \right)~\text{and}~f\left( {{a_1}, {a_2},\cdots, {a_n}} \right) \leq g\left( {{a_1}, {a_2},\cdots, {a_n}} \right)
 \end{equation*}
are  \textbf{cyclic inequalities}.

 In 1980, F. S. Cater established an interesting cyclic inequality involving power exponents as follows \cite{1}:
\begin{equation}\label{2}
\text{C}\left( {{a_1}, {a_2}, \cdots, {a_{n}}} \right) > 1 + \left( {n - 2} \right)\min \left\{ {a_1^{{a_2}},a_2^{{a_3}}, \cdots, a_{n - 1}^{{a_n}},a_n^{{a_1}}} \right\},~\forall n\geq2,
\end{equation}
which is called as \textbf{Cater inequality}.

Like the periodic function, the cyclic  function is also a class of important functions. Since the  analytic expression  of a cyclic function is very special and complex, so we need  to estimate  the bounds of the cyclic function or establish a cyclic inequality.

Cater inequality is a cyclic inequality,  which is a typical representative of the cyclic inequalities.  Since the analytic expression of the  Cater cyclic function is of extreme particularity and complexity, it is an  interesting topic  that to establish a new Cater-type cyclic inequality.

In this paper, we will study the lower with upper bounds of the Cater cyclic function and establish two Cater-type cyclic inequalities, and display the applications of the dimension reduction method \cite{3,11,9,10} in  the inequality theory.

The research methods of this paper  are based on the mathematical induction and the dimension reduction method. The research tools of this paper  include mathematical analysis theory, inequalities theory and the mean theory \cite{3,11,4,6,10,12,13,7,20,8,16,17,18,15,12.1,21}.

 Our main result are the following Theorems \ref{theorem2.4} and \ref{theorem2.3}.
 \begin{theorem}\label{theorem2.4}  Let $\left( {{a_1}, {a_2}, \cdots ,{a_n}} \right) \in {\left( 0,\infty \right)^n},$ where $n\geq2.$ If
\begin{equation}\label{4.01}
 0<a_1\leq a_2\leq\cdots\leq a_n~\text{with}~a_{1}^{a_{n}}\geq e^{-1},
\end{equation}
 then we have the following  \textbf{Cater-type cyclic inequality}:
\begin{equation}\label{5.01}
\text{C}\left( {{a_1}, {a_2}, \cdots, {a_{n}}} \right)\geq \text{C}^{ *}\left( {{a_1}, {a_2}, \cdots, {a_{n}}} \right).
\end{equation}
Equality in (\ref{5.01}) holds if $n=2$ or ${a_1} = {a_2} =\cdots  = {a_n}.$
\end{theorem}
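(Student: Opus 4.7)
My plan is to write $\text{C}-\text{C}^{*}$ as a sum of paired contributions and reduce the non-negativity of each pair to a one-variable monotonicity question in which the hypothesis $a_1^{a_n}\ge e^{-1}$ plays precisely the role it is designed for. The starting observation is that the $i=n$ summand of both $\text{C}$ and $\text{C}^{*}$ equals $a_n^{a_1}$, so those cancel and
\[
\text{C}\left({a_1},\ldots,{a_n}\right)-\text{C}^{*}\left({a_1},\ldots,{a_n}\right)\;=\;\sum_{i=1}^{n-1}\bigl(a_i^{a_{i+1}}-a_i^{a_{n+1-i}}\bigr).
\]
I would then group the $i$-th and $(n-i)$-th summands for $1\le i<n/2$; when $n$ is even the left-over middle summand has $n+1-i=i+1$ and so vanishes on the nose. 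Writing $u:=a_i,\;v:=a_{n-i},\;p:=a_{i+1},\;q:=a_{n+1-i}$, the ordering hypothesis (\ref{4.01}) gives both $u\le v$ and $p\le q$, and a short rearrangement shows the paired contribution equals $\phi(q)-\phi(p)$ with $\phi(x):=v^x-u^x$.

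Everything now comes down to proving that $\phi$ is non-decreasing on $[p,q]\subseteq[a_1,a_n]$, equivalently that $\phi'(x)=v^x\ln v-u^x\ln u\ge 0$ for $x\in[a_1,a_n]$ whenever $a_1\le u\le v\le a_n$. The two easy cases are $1\le u\le v$, handled by the fact that $t\mapsto t^x\ln t$ is non-decreasing on $[1,\infty)$, and $u<1\le v$, where $v^x\ln v\ge 0\ge u^x\ln u$. The delicate case, which I expect to be the main obstacle, is $u\le v\le 1$: both $\ln u$ and $\ln v$ are non-positive and the sign of $\phi'$ must be extracted carefully. Here I would introduce $\psi(t):=-t^x\ln t$, compute $\psi'(t)=-t^{x-1}(1+x\ln t)$, and observe that $\psi$ is non-increasing precisely on $\{t:\,t\ge e^{-1/x}\}$.

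The hypothesis $a_1^{a_n}\ge e^{-1}$ is then exactly what is needed: it rewrites as $a_1\ge e^{-1/a_n}$, and since $e^{-1/x}$ is increasing in $x$, every $x\le a_n$ and $t\ge a_1$ satisfy $t\ge a_1\ge e^{-1/a_n}\ge e^{-1/x}$. Thus $\psi$ is non-increasing on $[u,v]$, giving $\psi(u)\ge\psi(v)$, i.e.\ $\phi'(x)\ge 0$ on $[p,q]$. Summing the non-negative paired contributions yields the desired inequality (\ref{5.01}). For equality, the case $n=2$ collapses the sum $\sum_{i=1}^{n-1}$ to the single term $a_1^{a_2}-a_1^{a_2}=0$, and the case $a_1=\cdots=a_n$ makes every summand vanish, so equality holds in both.
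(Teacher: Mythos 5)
Your proposal is correct; I checked the decomposition and each case of the monotonicity argument, and the hypothesis $a_1^{a_n}\ge e^{-1}\Leftrightarrow a_1\ge e^{-1/a_n}$ enters exactly where you say it does (the sub-unit case $u\le v\le 1$, via $e^{-1/x}\le e^{-1/a_n}\le a_1\le t$ for $x\le a_n$). However, your global strategy differs from the paper's. The paper proves the two-term exchange inequality $a_i^{a_{j_i}}+a_k^{a_{j_k}}\ge a_i^{a_{j_k}}+a_k^{a_{j_i}}$ by fixing the exponents and showing $t\mapsto t^{a_{j_i}}-t^{a_{j_k}}$ is decreasing in the base $t$, and then runs a sorting induction over all permutations to show that $F(j_1\cdots j_n)=\sum_i a_i^{a_{j_i}}$ is minimized at the reversal $n(n-1)\cdots 1$; the theorem follows by specializing to the cyclic shift $j=23\cdots n1$. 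You instead cancel the common term $a_n^{a_1}$, pair the summands $i$ and $n-i$ of $\mathrm{C}-\mathrm{C}^{*}$, and reduce each pair to monotonicity of $\phi(x)=v^x-u^x$ in the exponent $x$ --- the same rearrangement phenomenon, but differentiated in the other variable and with no permutation machinery. Your route is shorter and self-contained for the stated theorem; the paper's route buys the stronger conclusion recorded in its Remark 2.3, namely $\mathrm{C}^{*}\le\sum_i a_i^{a_{j_i}}\le \mathrm{C}_{*}$ for every permutation $j_1\cdots j_n$, of which the theorem is one instance. Both arguments establish only the sufficiency of the stated equality conditions, which is all the theorem claims.
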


\begin{theorem}\label{theorem2.3} Let $\left( {{a_1}, {a_2}, \cdots ,{a_n}} \right) \in {\left( 0,\infty \right)^n},$ where $n\geq2.$ If
\begin{equation}\label{4}
 0<a_1\leq a_2\leq\cdots\leq a_n,
\end{equation}
 then we have the following \textbf{Cater-type cyclic inequality}:
\begin{equation}\label{5}
\text{C}\left( {{a_1}, {a_2}, \cdots, {a_{n}}} \right)\leq \text{C}_{*}\left( {{a_1}, {a_2}, \cdots, {a_{n}}} \right).
\end{equation}
Equality in (\ref{5}) holds if and only if ${a_1} = {a_2} =\cdots  = {a_n}.$
\end{theorem}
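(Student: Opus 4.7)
My plan is to prove Theorem~\ref{theorem2.3} by induction on $n$, in the spirit of the paper's dimension-reduction approach. The base case $n=2$ is handled by a direct case analysis, and the inductive step reduces $F_n := \text{C}_{*} - \text{C}$ at level $n$ to the same quantity at level $n-1$ by ``deleting'' the largest coordinate $a_n$.

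For the base case, fix $0 < a \le b$ and prove $a^a + b^b \ge a^b + b^a$ by splitting into three regions. If $1 \le a \le b$, factor $F_2 = b^a(b^{b-a}-1) - a^a(a^{b-a}-1)$ and use $b^a \ge a^a \ge 1$ together with $b^{b-a}-1 \ge a^{b-a}-1 \ge 0$. If $a \le 1 \le b$, the pieces $a^a - a^b$ and $b^b - b^a$ are independently nonnegative. The delicate subcase is $0 < a \le b \le 1$: set $\phi(s) := s^b - s^a$, so $F_2 = \phi(b) - \phi(a)$. The unique critical point of $\phi$ is $s_{*}=(a/b)^{1/(b-a)}$, and the key step is showing $a \ge s_{*}$, equivalently $\ln b \ge (1-b+a)\ln a$. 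I would introduce $h(y) := \ln y - (1-y+a)\ln a$, noting $h(a)=0$, $h'(y) = 1/y + \ln a$, and $h(1) = -a\ln a > 0$; the elementary bound $-t\ln t \le 1/e$ on $(0,1)$ together with the unimodality of $h$ forces $h \ge 0$ on $[a,1]$, so $\phi$ is nondecreasing on $[a,b]$ and $F_2 \ge 0$.

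For the inductive step, assume the theorem for $n-1$. A direct computation gives
\begin{equation*}
F_n(a_1,\ldots,a_n) - F_{n-1}(a_1,\ldots,a_{n-1}) = a_n^{a_n} + a_{n-1}^{a_1} - a_{n-1}^{a_n} - a_n^{a_1} = K(a_n) - K(a_{n-1}),
\end{equation*}
where $K(s) := s^{a_n} - s^{a_1}$. The function $K$ has a single minimum at $s_{*}=(a_1/a_n)^{1/(a_n-a_1)}$, and the base case applied to the pair $(a_1,a_n)$ gives $K(a_n) \ge K(a_1)$; combined with the unimodality of $K$ this forces $\max_{s \in [a_1,a_n]} K = K(a_n)$, so $K(a_{n-1}) \le K(a_n)$. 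Therefore $F_n \ge F_{n-1}(a_1,\ldots,a_{n-1}) \ge 0$ by the induction hypothesis, with strict inequality unless $a_{n-1}=a_n$; iterating forces $a_1=a_2=\cdots=a_n$ as the equality case.

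The main obstacle is the base-case region $0 < a \le b \le 1$: the integrand in the double-integral representation $F_2 = \int_a^b \int_a^b u^{v-1}(1+v\ln u)\,du\,dv$ is negative precisely where $u < e^{-1/v}$, which is exactly the phenomenon Theorem~\ref{theorem2.4} sidesteps by assuming $a_1^{a_n} \ge e^{-1}$. Here no such assumption is available, and one must argue globally via the unimodality of $\phi$ and the sharp bound $-t\ln t \le 1/e$; without this bound one cannot exclude $a < s_{*}$, and the naive monotonicity reduction would fail.
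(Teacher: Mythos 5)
Your proof is correct, and while it shares the paper's overall skeleton---induction on $n$ via the telescoping identity $F_n-F_{n-1}=a_n^{a_n}+a_{n-1}^{a_1}-a_{n-1}^{a_n}-a_n^{a_1}$, which is exactly the paper's reduction to $\phi(x,y,z)=y^y+z^x-(z^y+y^x)$ with $(x,y,z)=(a_1,a_n,a_{n-1})$---it handles the two technical cores quite differently. The paper proves $\phi\ge0$ with two separate lemmas (Lemma \ref{lemma3.4} for $y\ge1$ by direct factoring, and Lemma \ref{lemma3.5} for $0<x\le z\le y<1$ via the Cauchy mean value theorem supported by the auxiliary inequality $F(x,y)>0$ of Lemma \ref{lemma3.01}); you instead write $\phi=K(y)-K(z)$ with $K(s)=s^{y}-s^{x}$ strictly decreasing-then-increasing, so that $\max_{[x,y]}K$ is attained at an endpoint and the endpoint comparison $K(y)\ge K(x)$ is literally the $n=2$ statement. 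This collapses the three-variable lemma into the two-variable base case and renders Lemmas \ref{lemma3.1}--\ref{lemma3.5} unnecessary. Your base case is also different in its hard region $0<a\le b\le 1$: the paper (Lemma \ref{lemma3.0}) uses a weighted A-G estimate on $\mathrm{C}_{*}/\mathrm{C}$ together with monotonicity of $\log t/(1-t)$, whereas you locate the unique minimum $s_{*}$ of $s\mapsto s^{b}-s^{a}$ and show $s_{*}\le a$ via the concave function $h(y)=\ln y-(1-y+a)\ln a$ with $h(a)=0$ and $h(1)=-a\ln a>0$ (the bound $-t\ln t\le 1/e$ you invoke is not actually needed here: concavity alone forces the minimum of $h$ on $[a,1]$ to sit at an endpoint). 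The one point to tighten is the equality case: equality forces $F_{n-1}=0$, hence $a_1=\cdots=a_{n-1}$ by the induction hypothesis, and then $K(a_{n-1})=K(a_1)=K(a_n)$ is equality in the strict two-variable inequality, giving $a_1=a_n$; your phrase ``unless $a_{n-1}=a_n$'' elides this small step, but the conclusion stands.
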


\section{Proof of  Theorem \ref{theorem2.4}}
\begin{proof} Let $j_{1}j_{2}\cdots j_{n}$ be a permutation \cite{07} of $1,2,\ldots,n$, and let
\begin{equation}\label{0.00}
1\leq i< k\leq n~\text{and}~1\leq j_{i}<j_{k}\leq n .
\end{equation}
By (\ref{4.01}) and (\ref{0.00}), we have
\begin{equation}\label{0.0}
0<a_{1}\leq a_{i}\leq a_{k}\leq a_{n}~\text{and}~0<a_{1}\leq a_{j_{i}}\leq a_{j_{k}}\leq a_{n}.
\end{equation}

We first prove that
\begin{equation}\label{0.1}
a_{i}^{a_{j_{i}}}+a_{k}^{a_{j_{k}}}\geq a_{i}^{a_{j_{k}}}+a_{k}^{a_{j_{i}}}.
\end{equation}

Indeed, if $a_{i}= a_{k}$ or $a_{j_{i}}=a_{j_{k}}$, then (\ref{0.1}) is an equation. Now we assume that
\begin{equation}\label{0.011}0<a_{1}\leq a_{i}< a_{k}\leq a_{n}~\text{and}~0<a_{1}\leq a_{j_{i}}< a_{j_{k}}\leq a_{n}.\end{equation}
We define a auxiliary function as follows:
\[f(t)\triangleq t^{a_{j_{i}}}-t^{a_{j_{k}}},~0<a_{i}\leq t \leq a_{k}.\]
Then inequality (\ref{0.1}) can be rewritten as
\begin{equation}\label{0.2}
f(a_{i})\geq f(a_{k}).
\end{equation}
Sine
\[\frac{\text{d}f(t)}{\text{d}t}< 0,~\forall t\in \left[a_{i}, a_{k}\right]\Rightarrow f(a_{i})\geq f(a_{k}),\]
so we just need to prove that
\begin{equation}\label{0.3}
\frac{\text{d}f(t)}{\text{d}t}< 0,~\forall t\in \left[a_{i}, a_{k}\right].
\end{equation}
Since $t\geq a_{i}\geq a_{1}>0$,
\begin{eqnarray*}
 \frac{\text{d}f(t)}{\text{d}t}&=&a_{j_{i}}t^{a_{j_{i}}-1}-a_{j_{k}}t^{a_{j_{k}}-1}\\
 &=&a_{j_{k}}t^{a_{j_{i}}-1}\left(\frac{a_{j_{i}}}{a_{j_{k}}}-t^{a_{j_{k}}-a_{j_{i}}}\right)\\
 &\leq& a_{j_{k}}t^{a_{j_{i}}-1}\left(\frac{a_{j_{i}}}{a_{j_{k}}}-{a_{i}}^{a_{j_{k}}-a_{j_{i}}}\right)\\
 &\leq& a_{j_{k}}t^{a_{j_{i}}-1}\left(\frac{a_{j_{i}}}{a_{j_{k}}}-{a_{1}}^{a_{j_{k}}-a_{j_{i}}}\right),
\end{eqnarray*}
\[\frac{a_{j_{i}}}{a_{j_{k}}}-{a_{1}}^{a_{j_{k}}-a_{j_{i}}}< 0\Rightarrow (\ref{0.3}),\]
and
\[\frac{a_{j_{i}}}{a_{j_{k}}}-{a_{1}}^{a_{j_{k}}-a_{j_{i}}}< 0\Leftrightarrow \log a_{1}>- \frac{\log a_{j_{k}}-\log a_{j_{i}}}{a_{j_{k}}-a_{j_{i}}},\]
so we just need to prove that
\begin{equation}\label{0.4}
\log a_{1}>- \frac{\log a_{j_{k}}-\log a_{j_{i}}}{a_{j_{k}}-a_{j_{i}}}.
\end{equation}
By (\ref{4.01}), we have
\begin{equation}\label{0.60}
a_{1}^{a_{n}}\geq e^{-1}\Leftrightarrow \log a_{1}\geq -\frac{1}{a_{n}}.
\end{equation}
According to (\ref{0.011}), (\ref{0.60}) and the Lagrange mean value theorem,  there exist $\zeta \in \left(a_{j_{i}},a_{j_{k}}\right)$ such that
\[- \frac{\log a_{j_{k}}-\log a_{j_{i}}}{a_{j_{k}}-a_{j_{i}}}=-\frac{\text{d}\log \zeta}{\text{d}\zeta}=-\frac{1}{\zeta}<-\frac{1}{a_{j_{k}}}\leq -\frac{1}{a_{n}}\leq\log a_{1}\Rightarrow (\ref{0.4})\Rightarrow (\ref{0.3}).\]
Hence inequality (\ref{0.1}) is proved.

Based on the above proof, we know that equality in (\ref{0.1}) holds if and only if $a_{i}= a_{k}$ or $a_{j_{i}}=a_{j_{k}}$.

Next, we prove inequality (\ref{5.01}) as follows.

We define a auxiliary function as follows:
\begin{equation}\label{0.6}
F\left(j_{1}j_{2}\cdots j_{n}\right)\triangleq \sum_{i=1}^{n}a_{i}^{a_{j_{i}}},
\end{equation}
where $j_{1}j_{2}\cdots j_{n}$ is a permutation of $1,2,\ldots,n.$ Now we prove that
\begin{equation}\label{0.7}
F\left(j_{1}j_{2}\cdots j_{n}\right)\geq F\left(n(n-1)\cdots(n-m) j_{m+2}^{*}j_{m+3}^{*}\cdots j_{n}^{*}\right),~\forall m:0\leq m\leq n-1,
\end{equation}
where $j_{m+2}^{*}j_{m+3}^{*}\cdots j_{n}^{*}$ is a permutation of $1,2,\ldots,n-m-1.$

We use the mathematical induction for $m$.

(I) Let $m=0.$ If $j_{1}=n,$ then (\ref{0.7}) is an equality. Assume that $j_{1}<n.$ Then exists $k:2\leq k\leq n$, such that $j_{k}=n$.
Since
\[F\left(j_{1}j_{2}\cdots j_{k}\cdots j_{n}\right)=a_{1}^{a_{j_{1}}}+a_{k}^{a_{j_{k}}}+\sum_{1\leq i\leq n,~i\ne j_{1},j_{k}}a_{i}^{a_{j_{i}}},\]
 by  inequality (\ref{0.1}), we have
 \[a_{1}^{a_{j_{1}}}+a_{k}^{a_{j_{k}}}\geq a_{1}^{a_{j_{k}}}+a_{k}^{a_{j_{1}}}=a_{1}^{a_{n}}+a_{k}^{a_{j_{1}}},\]
 that is,
\[F\left(j_{1}j_{2}\cdots j_{n}\right)=F\left(j_{1}j_{2}\cdots j_{k}\cdots j_{n}\right)\geq F\left(j_{k}j_{2}\cdots j_{1}\cdots j_{n}\right)= F\left(nj_{2}^{*} j_{3}^{*}\cdots j_{n}^{*}\right),\]
where $j_{2}^{*}j_{3}^{*}\cdots j_{n}^{*}=j_{2}\cdots j_{1}\cdots j_{n}$ is a permutation of $1,2,\ldots,n-1.$ Hence inequality (\ref{0.7}) holds when  $m=0.$

(II) Suppose that inequality (\ref{0.7}) holds, where $0\leq m\leq n-2.$ Now we prove that
\begin{equation}\label{0.8}
F\left(j_{1}j_{2}\cdots j_{n}\right)\geq F\left(n(n-1)\cdots(n-m-1) j_{m+3}^{**}j_{m+4}^{**}\cdots j_{n}^{**}\right),
\end{equation}
where $ j_{m+3}^{**}j_{m+4}^{**}\cdots j_{n}^{**}$ is a permutation of $1,2,\ldots,n-m-2.$

We first prove that
\begin{equation}\label{0.9}
F\left(n(n-1)\cdots(n-m) j_{m+2}^{*}j_{m+3}^{*}\cdots j_{n}^{*}\right)\geq F\left(n\cdots(n-m-1) j_{m+3}^{**}j_{m+4}^{**}\cdots j_{n}^{**}\right),
\end{equation}
where $ j_{m+3}^{**}j_{m+4}^{**}\cdots j_{n}^{**}$ is a permutation of $1,2,\ldots,n-m-2.$

Indeed, if $ j_{m+2}^{*}=n-m-1,$ then (\ref{0.9}) is an equality. Assume that $j_{m+2}^{*}<n-m-1.$ Then there exists $k:m+3\leq k\leq n$, such that $j_{k}^{*}=n-m-1$.
By  inequality (\ref{0.1}) and the proof of (I), we have
\begin{eqnarray*}
 &&F\left(n(n-1)\cdots(n-m) j_{m+2}^{*}j_{m+3}^{*}\cdots j_{n}^{*}\right)\\
  &=& F\left(n(n-1)\cdots(n-m) j_{m+2}^{*}j_{m+3}^{*}\cdots j_{k}^{*}\cdots j_{n}^{*}\right) \\
  & \geq & F\left(n(n-1)\cdots(n-m)j_{k}^{*} j_{m+3}^{*}\cdots j_{m+2}^{*} \cdots j_{n}^{*}\right) \\
  & = & F\left(n(n-1)\cdots(n-m)(n-m-1) j_{m+3}^{*}\cdots j_{m+2}^{*} \cdots j_{n}^{*}\right) \\
   & = & F\left(n(n-1)\cdots(n-m)(n-m-1) j_{m+3}^{**}j_{m+4}^{**}\cdots j_{n}^{**}\right),
 \end{eqnarray*}
where $ j_{m+3}^{**}j_{m+4}^{**}\cdots j_{n}^{**}= j_{m+3}^{*}\cdots j_{m+2}^{*} \cdots j_{n}^{*}$ is a permutation of $1,2,\ldots,n-m-2.$ Thus, inequality (\ref{0.9}) is proved.

 By  inequalities (\ref{0.7}) and (\ref{0.9}), we get  the inequality (\ref{0.8}). This ends the proof.

In inequality (\ref{0.7}), set $m=n-1,$ we get
\[F\left(j_{1}j_{2}\cdots j_{n}\right)\geq F\left(n(n-1)\cdots321\right),\]
that is,
\begin{equation}\label{0.5}
\sum_{i=1}^{n}a_{i}^{a_{j_{i}}}\geq \sum_{i=1}^{n}a_{i}^{a_{n+1-i}}=\text{C}^{ *}\left( {{a_1}, {a_2}, \cdots, {a_{n}}} \right).
\end{equation}
Equality in (\ref{0.5}) holds if $j_{1}j_{2}\cdots j_{n}=n(n-1)\cdots321$ or ${a_1} ={a_2} = \cdots  = {a_n}.$

In inequality (\ref{0.5}), set
\[j_{1}j_{2}\cdots j_{n}= 23\cdots (n-1)n1.\]
Then inequality (\ref{0.5}) can be rewritten as inequality (\ref{5.01}). Hence inequality (\ref{5.01}) is proved.

Based on the above proof, we know that equality in (\ref{5.01}) holds if $n=2$ or ${a_1} = {a_2} = \cdots  = {a_n}.$

  The proof of   Theorem \ref{theorem2.4} is completed.
\end{proof}
\begin{remark}\label{remark4.1} Let $n\geq2,$ $1\leq a_1\leq a_2\leq\cdots\leq a_n$ or $e^{-1}\leq a_1\leq a_2\leq\cdots\leq a_n\leq 1$. Then (\ref{4.01}) holds.
According to Theorem \ref{theorem2.4}, inequality (\ref{5.01}) holds.
\end{remark}
\begin{remark}\label{remark4.2} Let $a_{i}=\varepsilon+{(i-1)}/{n},~1\leq i\leq n,~n\geq2,$ where $\varepsilon=0.5173446105249118\cdots $ is the root of the equation
$x^{x+1}=e^{-1},$ where $x\in(0,1).$ Then $0<a_1\leq a_2\leq\cdots\leq a_n$ and
\[a_{1}^{a_{n}}=\varepsilon^{\varepsilon+1-n^{-1}}>\varepsilon^{\varepsilon+1}=e^{-1}.\]
According to Theorem \ref{theorem2.4}, inequality (\ref{5.01}) holds. The  calculation of $\varepsilon$ is based on the \textbf{Mathematica} software. The relevant literatures on proving inequalities  by means of the mathematical software can be see \cite{11,6,7,8}.
\end{remark}
\begin{remark}\label{remark4.3} Based on the  proof of Theorem \ref{theorem2.4} we know that: Under the
 hypotheses of Theorem \ref{theorem2.4}, we have
 \begin{equation}\label{0.10}
\text{C}^{ *}\left( {{a_1}, {a_2}, \cdots, {a_{n}}} \right)=\sum_{i=1}^{n}a_{i}^{a_{n+1-i}}\leq\sum_{i=1}^{n}a_{i}^{a_{j_{i}}}\leq \sum_{i=1}^{n}a_{i}^{a_{i}}=\text{C}_{ *}\left( {{a_1}, {a_2}, \cdots, {a_{n}}} \right),
\end{equation}
where $j_{1}j_{2}\cdots j_{n}$ is a permutation of $1,2,\ldots,n$.
\end{remark}
\begin{remark}\label{remark4.4} For the Cater-type cyclic function $\text{C}^{ *}\left( {{a_1}, {a_2}, \cdots, {a_{n}}} \right)$, we have
\begin{equation}\label{0.11}
\text{C}^{ *}\left( {{a_1}, {a_2}, \cdots, {a_{n}}} \right)>\frac{n}{2},~\forall n\geq2,
\end{equation}
\begin{equation} \label{2.01}
\inf\left\{\text{C}^{ *}\left( {{a_1}, {a_2}, \cdots, {a_{2m}}} \right)\right\}=\text{C}^{ *}\left(0I_{m},I_{m}\right)=m
\end{equation}
and
\begin{equation} \label{2.02}
\inf\left\{\text{C}^{ *}\left( {{a_1}, {a_2}, \cdots, {a_{2m+1}}} \right)\right\}=\text{C}^{ *}\left(0I_{m}, e^{-1}, I_{m}\right)=m+e^{-e^{-1}},
\end{equation}
where $m\in\mathbb{N},$ $I_{m}=(1,1,\cdots, 1)\in\mathbb{R}^{m},$ and $e^{-e^{-1}}=0.6922006275553464\cdots>0.5.$

Indeed, in inequality (\ref{2}), set $n=2,$ we get
\[\text{C}\left( {{a_1}, {a_2}} \right)=a_{1}^{a_{2}}+a_{2}^{a_{1}}>1,~\forall a_{1}>0,~\forall a_{2}>0.\]
 Hence
\begin{equation}\label{2.0}
\text{C}^{ *}\left( {{a_1}, {a_2}, \cdots, {a_{n}}} \right)=\frac{1}{2}\sum_{i=1}^{n}\text{C}\left( {{a_i}, {a_{n+1-i}}} \right)>\frac{1}{2}\sum_{i=1}^{n}1=\frac{n}{2}.
\end{equation}
 Since
\[\text{C}\left( {0, 1} \right)=\text{C}\left( {1, 0} \right)=1,~\inf_{t>0}\left\{\text{C}\left( {{t}, {t}} \right)\right\}=2\inf_{t>0}\left\{t^t\right\}=\text{C}\left( {{e^{-1}}, {e^{-1}}} \right)=2e^{-e^{-1}},\]
by (\ref{2.0}), we get (\ref{2.01}) and (\ref{2.02}).
\end{remark}

\section{Proof of  Theorem \ref{theorem2.3}}

In order to prove Theorem \ref{theorem2.3}, we need to establish several lemmas as follows.

\begin{lemma}\label{lemma3.0} Let $\left( {{a_1}, \cdots ,{a_2}} \right) \in {\left( 0,\infty \right)^n}$ and $n=2.$ Then inequality (\ref{5}) holds. Equality in (\ref{5}) holds if and only if ${a_1} = {a_2}.$
\end{lemma}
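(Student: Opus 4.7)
Since the inequality is symmetric in $a_1$ and $a_2$, I will assume without loss of generality that $a_1 \le a_2$, set $x = a_1$ and $y = a_2$, and prove $\rho(x,y) := x^x + y^y - x^y - y^x \ge 0$ with equality if and only if $x = y$. My strategy is a three-case split based on the position of $x$ and $y$ relative to $1$, in which the first two cases are essentially one line and the third carries the real content.

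If $1 \le x \le y$, the factorization $\rho = y^x(y^{y-x} - 1) - x^x(x^{y-x} - 1)$ handles it, because each factor in the first product dominates the corresponding factor in the second and all four quantities are non-negative (using $y \ge x \ge 1$ and $y - x \ge 0$). If $x \le 1 \le y$, monotonicity of $t \mapsto x^t$ (non-increasing for $x \le 1$) gives $x^x \ge x^y$, and of $t \mapsto y^t$ (non-decreasing for $y \ge 1$) gives $y^y \ge y^x$, so $\rho \ge 0$ term by term. In both cases the inequality is strict whenever $x < y$.

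The substantive case is $0 < x \le y \le 1$. Here I will introduce $f(a) := a^x - a^y$ on $[x,y]$, so that $\rho = f(x) - f(y)$, and the goal reduces to showing $f$ is non-increasing on $[x,y]$. Computing $f'(a) = a^{x-1}(x - y\, a^{y-x})$ shows this amounts to $a^{y-x} \ge x/y$ for all $a \in [x, y]$; since $a \mapsto a^{y-x}$ is non-decreasing and $a \ge x$, it suffices to verify this at $a = x$, equivalently $\ln y \ge (1 + x - y)\ln x$. To establish this I will analyze $G(y) := \ln y - (1 + x - y)\ln x$ on $[x, 1]$: one has $G(x) = 0$, $G(1) = -x\ln x \ge 0$, and $G'(y) = 1/y + \ln x$ with its unique zero at $y^{\ast} = 1/|\ln x|$. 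When $x \ge e^{-1}$, $y^{\ast} \ge 1$ and $G$ is non-decreasing on $[x,1]$, so $G \ge 0$; when $x < e^{-1}$, $y^{\ast} \in (x, 1)$ (using $x|\ln x| \le 1/e < 1$) and $G$ rises from $G(x) = 0$ up to its maximum at $y^{\ast}$ and then descends to $G(1) > 0$, so again $G \ge 0$ throughout.

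The main obstacle I anticipate is this last subcase $x < e^{-1}$, where $G$ is not monotone on $[x, 1]$ and the conclusion $G \ge 0$ must come from combining the non-negativity at both endpoints $x$ and $1$ with the unimodal shape of $G$, rather than from a single-sign derivative argument. Strict inequality $\rho > 0$ for $x < y$ then follows because $G(y) > 0$ strictly on $(x, 1]$, forcing $f' < 0$ on $[x, y]$ and hence $f$ to be strictly decreasing; assembling the three cases gives $\rho \ge 0$ with equality if and only if $a_1 = a_2$.
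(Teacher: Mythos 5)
Your proof is correct, but it takes a genuinely different route from the paper's own proof of this lemma. The paper splits on whether $\max\{a_1,a_2\}\ge 1$: when it is, an algebraic rearrangement gives $a^a+b^b-(a^b+b^a)>\left(a^a-a^b\right)\left(a^a-b^a\right)/a^a\ge 0$; when both lie in $(0,1)$ it shows the \emph{ratio} $\text{C}_*/\text{C}>1$ by first deriving $b^{a-1}>a^{b-1}$ from the monotonicity of $t\mapsto \log t/(1-t)$ and then applying a weighted A--G inequality. You instead use a three-way case split whose only substantive case is $0<x\le y\le 1$, which you reduce to the monotonicity of $a\mapsto a^x-a^y$ on $[x,y]$; the resulting derivative condition $\log y\ge (1+x-y)\log x$ is precisely the positivity of the paper's auxiliary function $F(x,y)$ from (\ref{7.1}), which the paper proves in Lemma \ref{lemma3.01} by concavity in $y$ plus an endpoint-infimum argument, and which you prove by locating the unique critical point $y^*=1/|\log x|$ and checking both endpoints of the unimodal profile (including the needed bound $x|\log x|\le e^{-1}<1$ to place $y^*$ in $(x,1)$ when $x<e^{-1}$). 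In effect your argument is Lemma \ref{lemma3.5} specialized to $z=x$ (and Lemma \ref{lemma3.4} for the cases touching $[1,\infty)$), so it is more unified with the machinery the paper later builds for the inductive step, at the cost of a longer case analysis; the paper's A--G trick is shorter but more ad hoc. Your strictness claims in each case are justified, so the equality characterization $a_1=a_2$ also goes through.
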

 \begin{proof} If $a_{1}= a_2$, then $\mbox{\rm C}\left(a_{1},a_{2}\right)=\mbox{\rm C}_{*}\left(a_{1},a_{2}\right).$ Assume that $a_{1}\ne a_2.$ Without losing of generality, we may assume that $a= a_{1}> a_2=b>0.$

 If $a\geq 1$, then
\begin{eqnarray*}
a^{a-b}> b^{a-b}&\Rightarrow& b^{b}> \frac{a^{b}b^{a}}{a^{a}}\\
&\Rightarrow& \mbox{\rm C}_{*}\left(a_{1},a_{2}\right)-\mbox{\rm C}\left(a_{1},a_{2}\right)=a^{a}+b^{b}-\left(a^{b}+b^{a}\right)\\
&>& a^{a}+\frac{a^{b}b^{a}}{a^{a}}-\left(a^{b}+b^{a}\right)=\frac{\left(a^{a}-a^{b}\right)\left(a^{a}-b^{a}\right)}{a^{a}}\geq0\\
&\Rightarrow& (\ref{5}).
 \end{eqnarray*}

 Now we  assume that $1>a= a_{1}>a_2=b>0.$ By the A-G inequality \cite{17}
 \[\sum_{i=1}^{n}p_{i}x_{i}\geq\prod_{i=1}^{n}x_{i}^{p_{i}},\]
 where $p_{i}>0,~x_{i}>0,~i=1, \ldots,n,~n\geq2,~\sum_{i=1}^{n}p_{i}=1,$ and the logarithm inequalities
 \[\frac{x}{1+x}<\log(1+x)< x,~\forall x: -1<x\ne0,\]
 we have
  \begin{eqnarray*}\frac{\text{d}}{\text{d}t}\left(\frac{\log t}{1-t}\right)&=&(1-t)^2\left[\log(1+t-1)-\frac{t-1}{1+t-1}\right]>0,\forall t:0<t<1\\
  &\Rightarrow& \frac{\log a}{1-a}>\frac{\log b}{1-b}\Leftrightarrow  a^{1-b}-b^{1-a}> 0\\
  &\Rightarrow& b^{a-1}-a^{b-1}> 0\\
  &\Rightarrow&\frac{ab\left(b^{a-1}-a^{b-1}\right)}{a^{b}+b^{a}}=a\times\frac{b^{a}}{a^{b}+b^{a}}-b\times\frac{a^{b}}{a^{b}+b^{a}}>0\\
  &\Rightarrow& \frac{\mbox{\rm C}_{*}\left(a_{1},a_{2}\right)}{\mbox{\rm C}\left(a_{1},a_{2}\right)}=\frac{b^{a}}{a^{b}+b^{a}}\left(\frac{a}{b}\right)^{a}+
  \frac{a^{b}}{a^{b}+b^{a}}\left(\frac{b}{a}\right)^{b}\\
  &\geq&\left(\frac{a}{b}\right)^{a\times\frac{b^{a}}{a^{b}+b^{a}}}\left(\frac{b}{a}\right)^{b\times\frac{a^{b}}{a^{b}+b^{a}}}
  =\left(\frac{a}{b}\right)^{a\times\frac{b^{a}}{a^{b}+b^{a}}-b\times\frac{a^{b}}{a^{b}+b^{a}}}  >1\\
  &\Rightarrow&(\ref{5}).
  \end{eqnarray*}
Hence inequality (\ref{5}) holds when $n=2$, and equality in (\ref{5}) holds if and only if ${a_1} = {a_2}$. This ends the proof of Lemma \ref{lemma3.0}.
\end{proof}

According to the theory of mathematical analysis, we have the following Lemma \ref{lemma3.1}.

\begin{lemma}\label{lemma3.1} Let the function $ f:\left( {\alpha ,\beta } \right) \to \mathbb{R}$ be continuous, and let \[f(\alpha)\triangleq f\left( {\alpha  + 0} \right),~f(\beta)\triangleq f\left( {\beta  - 0} \right).\]
If $f$ has no any minimum points, then we have
\begin{equation}\label{7}
  \mathop {\inf }\limits_{t \in \left( {\alpha ,\beta } \right)} \left\{ {f\left( t \right)} \right\} = \min \left\{ {f\left( {\alpha} \right),f\left( {\beta } \right)} \right\};
\end{equation}
If $t_{1}, \cdots, t_{k}, ~k\geq1,$  are all the minimum points of the function $f$, then we have
\begin{equation}\label{7.01}
  \mathop {\inf }\limits_{t \in \left( {\alpha ,\beta } \right)} \left\{ {f\left( t \right)} \right\} = \min \left\{ {f\left( {\alpha} \right),  f\left( {\beta} \right), f\left(t_{1}\right), \cdots,  f\left(t_{k}\right)} \right\}.
\end{equation}
\end{lemma}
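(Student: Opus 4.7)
The plan is to extend $f$ continuously to the closed interval $[\alpha,\beta]$ using the given boundary values $f(\alpha)=f(\alpha+0)$ and $f(\beta)=f(\beta-0)$, then apply the extreme value theorem on a compact interval, and finally analyze where the minimum can sit.

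First, I would observe that defining $f(\alpha):=f(\alpha+0)$ and $f(\beta):=f(\beta-0)$ produces a function $\tilde f:[\alpha,\beta]\to\mathbb{R}$ that is continuous on the closed interval (continuity on the interior is given, and continuity at the endpoints follows from how the boundary values are defined). Since $[\alpha,\beta]$ is compact, $\tilde f$ attains its infimum at some point $t_0\in[\alpha,\beta]$, and this infimum equals $\inf_{t\in(\alpha,\beta)}f(t)$ because either $t_0$ is interior, or $t_0$ is a boundary point whose value is itself a limit of interior values.

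Next, I would do the case analysis on the location of $t_0$. If $t_0\in\{\alpha,\beta\}$, the infimum is already one of $f(\alpha)$, $f(\beta)$. If $t_0\in(\alpha,\beta)$, then because $t_0$ is a global minimizer of $\tilde f$ over the closed interval, it is in particular a local minimum point of $f$ on $(\alpha,\beta)$. In the first statement of the lemma, $f$ is assumed to have no local minimum points, so this interior case is impossible and the infimum must equal $\min\{f(\alpha),f(\beta)\}$, giving (\ref{7}). In the second statement, the interior minimizer $t_0$ must coincide with one of the listed local minimum points $t_1,\ldots,t_k$, so the infimum is at most $\min\{f(t_1),\ldots,f(t_k)\}$; combining with the possibility of $t_0$ being on the boundary yields (\ref{7.01}), with the reverse inequality being trivial since each of $f(\alpha),f(\beta),f(t_1),\ldots,f(t_k)$ is either a value of $f$ in $(\alpha,\beta)$ or a one-sided limit of such values.

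The only genuinely delicate point is justifying that when $t_0$ is interior, it is really a local minimum of the original $f$ (rather than some pathological flat piece etc.), but this is immediate: a global minimizer of $\tilde f$ lying in the open interval is automatically a local minimizer of $\tilde f$, and on the open interval $\tilde f$ coincides with $f$. So there is no serious obstacle; the lemma is essentially the extreme value theorem plus bookkeeping of which critical candidates can host the infimum.
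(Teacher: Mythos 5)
Your proposal is correct, and there is nothing in the paper to compare it against: the authors state Lemma \ref{lemma3.1} with no proof at all, prefacing it only with the phrase ``according to the theory of mathematical analysis.'' Your argument --- extend $f$ to $\tilde f$ on $[\alpha,\beta]$ using the prescribed one-sided limits, invoke the extreme value theorem on the compact interval, and then locate the minimizer $t_0$ either at an endpoint or at an interior point (which is then forced to be a local minimum point of $f$, hence excluded in the first case and equal to some $t_j$ in the second) --- is exactly the standard justification the authors are waving at, and every step you give is sound, including the identification $\inf_{[\alpha,\beta]}\tilde f=\inf_{(\alpha,\beta)}f$ via the endpoint values being limits of interior values. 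The only thing worth flagging is that both the lemma as stated and your proof silently assume that $\alpha$ and $\beta$ are finite and that the one-sided limits $f(\alpha+0)$, $f(\beta-0)$ exist and are finite (otherwise $[\alpha,\beta]$ is not compact or $\tilde f$ is not a continuous real-valued extension); these hypotheses do hold in the paper's only application of the lemma, namely Lemma \ref{lemma3.01}, where $(\alpha,\beta)=(x,1)$ and the function is concave in $y$, so the gap is in the lemma's statement rather than in your argument.
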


In the following discussion, we define an auxiliary function as follows:
\begin{equation}\label{7.1}
 F:(0,\infty)^{2}\rightarrow\mathbb{R},~ F\left( {x,y} \right) \triangleq \left( {y - x} \right)\log x + \log y - \log x.
\end{equation}

\begin{lemma}\label{lemma3.01} Let $F$ be defined by (\ref{7.1}). Then we have
 \begin{equation}\label{7.2}
F(x,y)>0, ~\forall x,y: 0<x<y<1.
\end{equation}
\end{lemma}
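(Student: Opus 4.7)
The plan is to fix $x \in (0,1)$ and regard $F(x,y)$ as a one-variable function $g(y) := F(x,y)$ of $y$ on the closed interval $[x,1]$. The first step is to evaluate the endpoints: directly from (\ref{7.1}) one gets $g(x) = 0$, while $g(1) = (1-x)\log x - \log x = -x\log x$, which is strictly positive because $x \in (0,1)$.

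The second step is to observe that $g$ is strictly concave in $y$. A one-line computation gives $g'(y) = \log x + 1/y$ and $g''(y) = -1/y^{2} < 0$ on $[x,1]$. Strict concavity together with the two endpoint values finishes the lemma: for any $y \in (x,1)$, writing $y = (1-t)x + t\cdot 1$ with $t = (y-x)/(1-x) \in (0,1)$, one has
\[
g(y) \;>\; (1-t)\,g(x) + t\,g(1) \;=\; \frac{y-x}{1-x}\,(-x\log x) \;>\; 0,
\]
which is precisely (\ref{7.2}).

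I do not anticipate a real obstacle. The only subtlety worth naming is that $g'$ is \emph{not} sign-definite on $[x,1]$ in general: when $x < e^{-1}$ the interior critical point $y^{*} = -1/\log x$ lies in $(x,1)$ and $g$ is unimodal rather than monotone, so a naive proof by monotonicity of $g$ would force a case split on whether $x \geq e^{-1}$ or $x < e^{-1}$. The concavity/chord argument above collapses both cases into a single inequality, which is why I would prefer it over an analysis of the sign of $g'$.
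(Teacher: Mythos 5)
Your proposal is correct and takes essentially the same route as the paper: fix $x$, observe $\partial^2 F/\partial y^2 = -1/y^2 < 0$ so $F(x,\cdot)$ is strictly concave in $y$, and conclude from the endpoint values $F(x,x)=0$ and $F(x,1)=-x\log x>0$. The only cosmetic difference is that you apply the strict chord inequality directly (which even yields the quantitative bound $F(x,y)\geq \frac{y-x}{1-x}(-x\log x)$), whereas the paper routes the same concavity observation through its Lemma~\ref{lemma3.1} on infima of functions with no interior minimum.
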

\begin{proof} We arbitrarily fix  $x \in \left( {0,1} \right),$ that is, $x \in \left( {0,1} \right)$ is a constant. Since $y\in(x,1),$ and
$$\;\frac{{\partial F\left( {x,y} \right)}}{{\partial y}} = \log x + \frac{1}{y},\;\;\frac{{{\partial ^2}F\left( {x,y} \right)}}{{\partial {y^2}}} =  - \frac{1}{{{y^2}}} < 0,$$
the function $F:\left( {x,1} \right) \to \mathbb{R}$ is a concave function for the variable $y$, which has no any minimum points. So, by Lemma \ref{lemma3.1}, we have
\begin{equation}\label{12.1}
F\left( {x,y} \right) > \mathop {\inf }\limits_{x < y < 1} \left\{ {F\left( {x,y} \right)} \right\}\; = \min \left\{ {F\left( {x,x} \right),F\left( {x,1} \right)} \right\} = \min \left\{ {0, - x\log x} \right\} = 0.
\end{equation}
This ends the proof of Lemma \ref{lemma3.01}.
\end{proof}

In the following discussion, we define an auxiliary function as follows:

\begin{equation}\label{13}
  \phi: \Omega\rightarrow\mathbb{R},~\phi(x,y,z)\triangleq y^y+z^x-(z^y+y^x),
\end{equation}
where
\begin{equation}\label{14}
 \Omega\triangleq \left\{(x,y,z)\in (0,\infty)^{3}:\max\left\{x,z\right\}\leq y \right\}.
\end{equation}

\begin{lemma}\label{lemma3.4} Let the function $\phi: \Omega\rightarrow\mathbb{R}$ be define by (\ref{13}), and let $(x,y,z)\in\Omega.$ If $y\geq1,$ then we have the inequality
\begin{equation}\label{15}
 \phi(x,y,z)\geq 0.
\end{equation}
Equality in (\ref{15}) holds if and only if $y=z$ or $y=x$.
\end{lemma}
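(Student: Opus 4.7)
The plan is to rewrite $\phi$ so as to reveal it as a difference of values of a single-variable function and then exploit monotonicity. With $y, z$ regarded as fixed parameters, I would introduce
\[ h(s) \triangleq y^s - z^s, \qquad s \in [x, y], \]
so that $\phi(x,y,z) = h(y) - h(x)$. Since $x \leq y$ by the definition of $\Omega$, it suffices to prove that $h$ is non-decreasing on $[x, y]$; the inequality (\ref{15}) then follows at once by evaluating at the endpoints.

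To verify monotonicity I would compute
\[ h'(s) = y^s \log y - z^s \log z \]
and argue $h'(s) \geq 0$ on $[x, y]$. Since $y \geq 1$, we have $\log y \geq 0$, so $y^s \log y \geq 0$. I would then split on the location of $z$ relative to $1$. If $z < 1$, then $\log z < 0$ and hence $-z^s \log z > 0$, which immediately gives $h'(s) > 0$. If $1 \leq z \leq y$, I would invoke strict monotonicity of $u \mapsto u^s \log u$ on $[1, \infty)$: its derivative with respect to $u$ equals $u^{s-1}(s \log u + 1) > 0$ there, and $y \geq z \geq 1$ then yields $y^s \log y \geq z^s \log z$, i.e.\ $h'(s) \geq 0$. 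In both subcases $h$ is non-decreasing, so $\phi \geq 0$.

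For the equality characterization, substituting $x = y$ or $y = z$ directly into (\ref{13}) gives $\phi = 0$. Conversely, suppose $\phi = 0$ with $x < y$; then $h(x) = h(y)$ together with $h$ non-decreasing forces $h$ to be constant on $[x,y]$, hence $h'(s) \equiv 0$ there. The subcase $z < 1$ is excluded by the strict inequality $h'(s) > 0$ found above, so $z \geq 1$, and the strict monotonicity of $u \mapsto u^s \log u$ on $[1, \infty)$ then forces $y = z$. The main delicate point, which I would handle carefully, is the degenerate boundary $y = 1$: there $y^s \log y = 0$ and $h'(s) \equiv 0$ forces $z^s \log z = 0$, so $z = 1 = y$, still matching the stated equality conditions. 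Apart from that boundary check the argument is a clean one-variable monotonicity reduction, so I expect no substantial obstacle.
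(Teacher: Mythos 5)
Your argument is correct, including the equality analysis, but it takes a genuinely different route from the paper. You group the four terms as $\phi=(y^y-z^y)-(y^x-z^x)=h(y)-h(x)$ with $h(s)=y^s-z^s$, i.e.\ you compare over the \emph{exponent}, and then prove $h'\geq 0$ on $[x,y]$ by a calculus argument (splitting on $z<1$ versus $z\geq 1$ and using the strict monotonicity of $u\mapsto u^{s}\log u$ on $[1,\infty)$). The paper instead groups $\phi=(y^y-y^x)-(z^y-z^x)=y^{x}\left(y^{y-x}-1\right)-z^{x}\left(z^{y-x}-1\right)$, comparing over the \emph{base}, and disposes of the two cases $z\leq 1\leq y$ and $1\leq z\leq y$ by pure sign analysis and multiplication of two nonnegative, ordered factors --- no derivatives at all. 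The paper's version is slightly more elementary; yours has the advantage that the monotonicity of $h$ gives the equality characterization (constancy of $h$ on $[x,y]$ forces $x=y$ or $z=y$) in a cleaner, more mechanical way, whereas the paper's ``based on the above proof'' claim for the equality case requires the reader to re-examine when each of its chained inequalities is tight. Your extra care at the boundary $y=1$ is sound though strictly redundant, since $z\geq 1$ together with $z\leq y=1$ already forces $z=y=1$.
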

\begin{proof}  The inequality (\ref{15}) can be rewritten as
\begin{equation}\label{16}
  y^{x}\left(y^{y-x}-1\right)\geq z^{x}\left(z^{y-x}-1\right).
  \end{equation}
If $ z\leq1\leq y, $ by $0<\max\left\{x,z\right\}\leq y$, we have
\[ y^{x}\left(y^{y-x}-1\right)\geq 0\geq z^{x}\left(z^{y-x}-1\right)\Rightarrow(\ref{16})\Rightarrow(\ref{15});\]
If $1\leq z\leq y, $ from $0<\max\left\{x,z\right\}\leq y$, we have
\begin{eqnarray*}
 y^{x}\geq z^{x}>0,~y^{y-x}-1\geq z^{y-x}-1\geq 0\Rightarrow  y^{x}\left(y^{y-x}-1\right)\geq z^{x}\left(z^{y-x}-1\right)\Rightarrow(\ref{16})\Rightarrow(\ref{15}).
 \end{eqnarray*}
 Hence (\ref{15}) is proved.

Based on the above proof, we  see that equality in (\ref{15}) holds if and only if $y=z$ or $y=x$.  Lemma \ref{lemma3.4} is proved.
\end{proof}

\begin{lemma}\label{lemma3.5} Let the function $\phi: \Omega\rightarrow\mathbb{R}$ be define by (\ref{13}), and let $(x,y,z)\in\Omega.$ If $0<x\leq z\leq y<1,$ then  inequality  (\ref{15}) also holds. Equality in (\ref{15}) holds if and only if $y=z$.
\end{lemma}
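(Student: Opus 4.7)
The plan is to fix $(x,y)$ and study the one-variable restriction
\[
\psi(z) \triangleq \phi(x,y,z) = y^y - y^x + z^x - z^y
\]
as $z$ varies over $[x,y]$. The case $x = y$ forces $z = y$ and makes $\phi$ vanish identically, so I would assume $0 < x < y < 1$ and aim to prove that $\psi$ is strictly decreasing on $[x,y]$; combined with $\psi(y) = 0$, this will immediately yield the inequality together with the equality case $z = y$.

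The endpoint values are quick. Direct substitution gives $\psi(y) = 0$, while Lemma \ref{lemma3.0} applied to the pair $(a_1, a_2) = (y, x)$ yields $\psi(x) = x^x + y^y - x^y - y^x > 0$, strict because $x \ne y$. Differentiating gives
\[
\psi'(z) = x z^{x-1} - y z^{y-1} = z^{x-1}(x - y z^{y-x});
\]
since the factor $x - y z^{y-x}$ is strictly decreasing in $z$, the equation $\psi'(z) = 0$ has a unique positive root $z^{*} = (x/y)^{1/(y-x)}$, and $\psi'$ switches sign exactly once, from positive to negative, as $z$ passes through $z^{*}$.

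The crux is then to show $z^{*} < x$, equivalently $\psi'(x) < 0$. Taking logarithms,
\[
\psi'(x) < 0 \iff x^x < y\, x^{y-1} \iff (x - y + 1)\log x - \log y < 0 \iff F(x,y) > 0,
\]
with $F$ as in (\ref{7.1}); this is precisely Lemma \ref{lemma3.01}. Combining $\psi'(x) < 0$ with the single sign-change of $\psi'$ at $z^{*}$ forces $\psi' < 0$ throughout $[x, y]$, so $\psi$ is strictly decreasing there. Together with $\psi(y) = 0$, this gives $\psi(z) > 0$ for $z \in [x, y)$ and equality only at $z = y$, as required.

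The main hurdle is recognizing that the condition $\psi'(x) < 0$ reduces exactly to Lemma \ref{lemma3.01}; once this is spotted, the remainder is bookkeeping. An equivalent, slightly more computational alternative uses the critical-point relation $(z^{*})^{y-x} = x/y$ to simplify $\psi''(z^{*}) = (z^{*})^{x-2}\,x(x - y) < 0$; any interior critical point is then a strict local maximum, no interior local minimum exists, and Lemma \ref{lemma3.1} applied with boundary values $\psi(x) > 0$ and $\psi(y) = 0$ concludes the proof.
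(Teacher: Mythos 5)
Your argument is correct, and it reaches the same pivotal two-variable inequality as the paper but by a genuinely different and more elementary route. The paper rewrites (\ref{15}) as $y^{x}\left(1-y^{y-x}\right)<z^{x}\left(1-z^{y-x}\right)$, takes logarithms, and invokes the Cauchy mean value theorem for $f(t)=-\log\left(1-t^{y-x}\right)$ and $g(t)=\log t$ together with the strict monotonicity of $f'/g'$, arriving at the condition $\frac{(y-x)x^{y-x}}{1-x^{y-x}}>x$, which is then shown to be equivalent to $F(x,y)>0$, i.e. Lemma \ref{lemma3.01}. You instead freeze $(x,y)$ and differentiate $\psi(z)=\phi(x,y,z)$ directly: the factorization $\psi'(z)=z^{x-1}\left(x-yz^{y-x}\right)$ exhibits a single sign change at $z^{*}=(x/y)^{1/(y-x)}$, and your condition $\psi'(x)<0$ reduces to $x<y\,x^{y-x}$, which is exactly the same inequality $F(x,y)>0$ that the paper needs (indeed $\frac{(y-x)x^{y-x}}{1-x^{y-x}}>x$ is algebraically equivalent to $y\,x^{y-x}>x$). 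Both proofs therefore funnel into Lemma \ref{lemma3.01}, but yours replaces the Cauchy mean value theorem and the logarithmic substitution with a one-variable monotonicity analysis, and it delivers the equality case $z=y$ immediately from $\psi(y)=0$ together with strict decrease on $[x,y]$. One small remark: the endpoint evaluation $\psi(x)>0$ via Lemma \ref{lemma3.0} is not needed for your main argument (only for the alternative second-derivative variant using Lemma \ref{lemma3.1}), so the primary proof is self-contained modulo Lemma \ref{lemma3.01}.
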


\begin{proof} By the proof of Lemma \ref{lemma3.4}, we  just need to prove (\ref{16}).

Indeed, if $y= z$, then (\ref{16}) is an  equation. Let's assume that $0<x\leq z<y<1.$ Then  $0<x<y<1.$

Now we proved that
\begin{equation}\label{16.1}
  y^{x}\left(y^{y-x}-1\right)> z^{x}\left(z^{y-x}-1\right).
  \end{equation}
Since
\begin{eqnarray*}
(\ref{16.1})&\Leftrightarrow& y^{x}\left(1-y^{y-x}\right)< z^{x}\left(1-z^{y-x}\right)\\
&\Leftrightarrow& x\log y+\log\left(1-y^{y-x}\right)< x\log z+\log\left(1-z^{y-x}\right)\\
&\Leftrightarrow& x(\log y-\log z)< -\log\left(1-y^{y-x}\right)+\log\left(1-z^{y-x}\right)\\
&\Leftrightarrow& x<\frac{-\log\left(1-y^{y-x}\right)+\log\left(1-z^{y-x}\right)}{\log y-\log z},
  \end{eqnarray*}
we see that inequality (\ref{16.1}) can be rewritten as
\begin{equation}\label{17}
  x < \frac{{f\left( y \right) - f\left( z \right)}}{{g\left( y \right) - g\left( z \right)}},
\end{equation}
where
\begin{equation}\label{18}
  f\left( t \right) \triangleq  - \log \left( {1 - {t^{y - x}}} \right),\;g\left( t \right) \triangleq \log t,~0<x\leq z \leq t \leq y < 1,
\end{equation}
and $t$ is independent of $x,y$ and $z$. By (\ref{18}), we have
\begin{equation}\label{18.1}
 \;\frac{{{f'}\left( t \right)}}{{{g'}\left( t \right)}} = \frac{{\left( {y - x} \right){t^{y - x}}}}{{1 - {t^{y - x}}}}=(y-x)\left( \frac{{1}}{{1 - {t^{y - x}}}}-1\right),\;0<x\leq z \leq t \leq y < 1.
\end{equation}
By (\ref{18.1}),  the function $\omega: (0,1)\rightarrow\mathbb{R},~\omega(t)\triangleq{f'}\left( t \right)/{g'}\left( t \right)$, is strictly incremental. So we have
\begin{equation}\label{18.2}
 \frac{{{f'}\left( t \right)}}{{{g'}\left( t \right)}}>\frac{{{f'}\left( z \right)}}{{{g'}\left( z \right)}}\geq\frac{{{f'}\left(x \right)}}{{{g'}\left( x \right)}},
 \;\forall t: 0<x\leq z < t < y < 1.
\end{equation}
According to the Cauchy mean value theorem, (\ref{18}), (\ref{18.1}) and (\ref{18.2}), there exists a $\zeta  \in \left( {z,y} \right) \subset \left( {0,1} \right)$ such that
\begin{equation}\label{19}
 \frac{{f\left( y \right) - f\left( z \right)}}{{g\left( y \right) - g\left( z \right)}} = \frac{{{f'}\left( \zeta  \right)}}{{{g'}\left( \zeta  \right)}} > \frac{{{f'}\left( x \right)}}{{{g'}\left( x  \right)}} = \frac{{\left( {y - x} \right){x^{y - x}}}}{{1 - {x^{y - x}}}}.
\end{equation}
Noting that
\begin{eqnarray*}
\frac{{\left( {y - x} \right){x^{y - x}}}}{{1 - {x^{y - x}}}} > x &\Leftrightarrow&  y{x^{y - x}} - {x^{y - x + 1}} > x - {x^{y - x + 1}}\\
 &\Leftrightarrow& y{x^{y - x}} >x \\
&\Leftrightarrow& \left( {y - x} \right)\log x + \log y - \log x >0,\\
\end{eqnarray*}
that is,
\begin{equation}\label{9.2}
 \frac{{\left( {y - x} \right){x^{y - x}}}}{{1 - {x^{y - x}}}} > x \Leftrightarrow  F\left( {x,y} \right)>0.
\end{equation}
By $0<x<y<1,$ (\ref{19}), (\ref{9.2}) and  Lemma \ref{lemma3.01}, we have
\begin{eqnarray*}
&&F\left( {x,y} \right)>0, ~\forall x,y: 0<x<y<1\\
&\Rightarrow&\frac{{\left( {y - x} \right){x^{y - x}}}}{{1 - {x^{y - x}}}} > x, \\
&\Rightarrow&\frac{{f\left( y \right) - f\left( z \right)}}{{g\left( y \right) - g\left( z \right)}}> \frac{{\left( {y - x} \right){x^{y - x}}}}{{1 - {x^{y - x}}}}> x, \\
&\Rightarrow& \frac{{f\left( y \right) - f\left( z \right)}}{{g\left( y \right) - g\left( z \right)}} > x\\
&\Rightarrow& (\ref{17}) \Rightarrow (\ref{16.1}).
  \end{eqnarray*}
Hence inequality (\ref{16.1}) is proved.

Since $(\ref{16.1}) \Rightarrow (\ref{16})\Rightarrow(\ref{15})$, so the inequality  (\ref{15}) is also proved.

Based on the above proof, we see that equality in (\ref{15}) holds if and only if $y=z$.  Lemma \ref{lemma3.5} is proved.
 \end{proof}

 Now we turn to the proof of Theorem \ref{theorem2.3}.

\begin{proof} We use the mathematical induction for $n$.

When $n = 2$, by Lemma \ref{lemma3.0}, Theorem \ref{theorem2.3} is true.

Suppose that Theorem \ref{theorem2.3} is true for $n$, $n \geq 2.$ Then
\begin{equation}\label{34}
 \text{C}\left( {{a_1}, {a_2}, \cdots, {a_{n}}} \right)-\text{C}_{*}\left( {{a_1}, {a_2},  \cdots, {a_{n}}} \right)\leq 0,
\end{equation}
and equality in (\ref{34}) holds if and only if ${a_1} = {a_2} =  \cdots  = {a_n}$, where
\[0<{{a_1}\leq{a_2}\leq \cdots\leq{a_{n}}}.\] Now we prove that
\begin{equation}\label{30}
 \text{C}\left( {{a_1}, {a_2}, \cdots ,{a_{n+1}}} \right) \leq \text{C}_{*}\left( {{a_1}, {a_2}, \cdots,{a_{n+1}}} \right),
\end{equation}
and equality in (\ref{30}) holds if and only if ${{a_1}={a_2}=\cdots={a_{n+1}}},$ where
\[0<{{a_1}\leq{a_2}\leq \cdots\leq{a_{n+1}}}.\]

Since $0<{{a_1}\leq{a_2}\leq \cdots\leq{a_{n+1}}},$  we have
\begin{equation}\label{31}
\max \left\{ {{a_1},{a_2}, \cdots ,{a_{n + 1}}} \right\} = {a_{n + 1}}.
\end{equation}

For ease of expression, by (\ref{31}), we may assume that
\begin{equation}\label{32}
\left( {x,y,z} \right) \triangleq \left( {{a_1},{a_{n + 1}},{a_n}} \right) \in \Omega,
\end{equation}
where $\Omega$ is defined by (\ref{14}). Since
 \begin{eqnarray*}
  &&\text{C}\left( {{a_1}, {a_2}, \cdots, {a_{n+1}}} \right) - \text{C}_{*}\left( {{a_1}, {a_2}, \cdots,{a_{n+1}}} \right)\\
  &=&\sum\nolimits_{i=1}^{n}{a_i^{{a_{i + 1}}}}+{a_{n+1}^{a_{1}}} -\left(\sum\nolimits_{i=1}^{n}{a_i^{{a_{i }}}}+{a_{n+1}^{a_{n+1}}}\right)\\
 &=&\sum\nolimits_{i=1}^{n-1}{a_i^{{a_{i + 1}}}}+{a_{n}^{a_{n+1}}}+{a_{n+1}^{a_{1}}} -\left(\sum\nolimits_{i=1}^{n}{a_i^{{a_{i }}}}+{a_{n+1}^{a_{n+1}}}\right)\\
 &=&\sum\nolimits_{\text{cyc}:~n}^{1\leq i\leq n}{a_i^{{a_{i + 1}}}}-{a_{n}^{a_{1}}}+{a_{n}^{a_{n+1}}}+{a_{n+1}^{a_{1}}} -\left(\sum\nolimits_{i=1}^{n}{a_i^{{a_{i }}}}+{a_{n+1}^{a_{n+1}}}\right)\\
 &=&\left(\sum\nolimits_{\text{cyc}:~n}^{1\leq i\leq n}{a_i^{{a_{i + 1}}}}-\sum\nolimits_{i=1}^{n}{a_i^{{a_{i }}}}\right)-{a_{n}^{a_{1}}}+{a_{n}^{a_{n+1}}}+{a_{n+1}^{a_{1}}} -{a_{n+1}^{a_{n+1}}}\\
  &=&\left(\sum\nolimits_{\text{cyc}:~n}^{1\leq i\leq n}{a_i^{{a_{i + 1}}}}-\sum\nolimits_{i=1}^{n}{a_i^{{a_{i }}}}\right)
  -\left[y^y+z^x-\left(z^y+y^x\right)\right]\\
  &=&\text{C}\left( {{a_1}, {a_2}, \cdots, {a_{n}}} \right) - \text{C}_{*}\left( {{a_1}, {a_2}, \cdots, {a_{n}}} \right)-\phi(x,y,z),
  \end{eqnarray*}
 we have,
  \begin{equation}\label{33}
\text{C}\left( {{a_1}, \cdots, {a_{n+1}}} \right) - \text{C}_{*}\left( {{a_1}, \cdots, {a_{n+1}}} \right)=\text{C}\left( {{a_1}, \cdots ,{a_{n}}} \right) - \text{C}_{*}\left( {{a_1}, \cdots ,{a_{n}}} \right) -\phi(x,y,z),
\end{equation}
where $\phi(x,y,z)$ is defined by (\ref{13}).

Since $ 0<a_1\leq a_2\leq\cdots\leq a_{n+1},$ we see that (\ref{31}) with (\ref{32}) hold and $x\leq z\leq y$. According to the inductive hypothesis, we see that (\ref{34}) holds. By Lemmas \ref{lemma3.4} and \ref{lemma3.5}, we know that (\ref{15}) holds. According to the (\ref{15}), (\ref{34}) and (\ref{33}), we have
\begin{eqnarray*}
&& \text{C}\left( {{a_1}, {a_2},  \cdots, {a_{n+1}}} \right) - \text{C}_{*}\left( {{a_1}, {a_2},  \cdots, {a_{n+1}}} \right)\\
&=&\text{C}\left( {{a_1}, {a_2},  \cdots ,{a_{n}}} \right) - \text{C}_{*}\left( {{a_1}, {a_2},  \cdots ,{a_{n}}} \right) -\phi(x,y,z)\\
&\leq& -\phi(x,y,z)\leq0\Rightarrow (\ref{30}).
  \end{eqnarray*}
Hence inequality (\ref{30}) is proved.

According to the inductive hypothesis and Lemmas \ref{lemma3.4} with \ref{lemma3.5}, we  see that equality in (\ref{30}) holds if and only if
${a_1} = {a_2} = \cdots  = {a_n}~\text{and}~a_{n + 1}=y=z={a_n}$, i.e. ${a_1} = {a_2} = \cdots  = a_{n+1}.$

This completes the proof of   Theorem \ref{theorem2.3}.
\end{proof}
\begin{remark} \label{remark3.1} The proof method of  Theorem \ref{theorem2.3} is called the dimensionality reduction method. The relevant literatures on proving inequalities by means of the  dimensionality reduction method can be see \cite{3,11,9,10}. The dimension reduction process of the proof is as follows. \\
\textbf{(A)} Inequality (\ref{30}) contain $n+1$ variables and  inequality (\ref{5}) contain $n$ variables. We  transform inequality  (\ref{30})  into inequality  (\ref{5}) by mean of the mathematical induction. This transformation process is based on the inequality (\ref{15}). \\
\textbf{(B)} Inequality (\ref{15}) contain three variables $x,y,z.$   By mean of Lemmas \ref{lemma3.1}-\ref{lemma3.5}, we transform  inequality  (\ref{15})  into inequality (\ref{7.2}), which contain only two variables.\\
\textbf{(C)} Lemmas \ref{lemma3.1} and \ref{lemma3.01} transform inequality  (\ref{7.2})  into inequality  (\ref{12.1}). There is only one variable at the right end of the inequality (\ref{12.1}).\\
\textbf{(D)} For an inequality with only one variable, we use mathematical analysis theory to deal with it.\\
\end{remark}
\begin{remark}\label{remark3.2} Let the function $f:[0,1]\rightarrow(0,\infty)$ be continuous and incremental, and let $\lim_{n\rightarrow\infty}n^{-1}\text{C}\left( {{a_1}, {a_2}, \cdots, {a_{n}}} \right)$ exists, where $a_{i}\triangleq f\left({i}/{n}\right),$ $i=1,2,\ldots,n,$ $n\geq 2$, and $n^{-1}\text{C}\left( {{a_1}, {a_2}, \cdots, {a_{n}}} \right)$  is the mean of the positive real numbers $a_1^{{a_2}}, a_2^{{a_3}}, \cdots, a_{n - 1}^{{a_n}}, a_n^{{a_1}}.$ Then, by Theorem \ref{theorem2.3} and the mathematical analysis theory, we have the following inequality:
\begin{equation}\label{02}
\lim_{n\rightarrow\infty}n^{-1}\text{C}\left( {{a_1}, {a_2}, \cdots, {a_{n}}} \right)\leq\int_{0}^{1}\left[f\left(t\right)\right]^{f\left(t\right)}\text{d}t,
\end{equation}
where $\int_{0}^{1}\left[f\left(t\right)\right]^{f\left(t\right)}\text{d}t$ is the mean of the function $f^{f}.$
The relevant literatures on  mean  theory can be see \cite{3,11,4,6,10,12,13,7,20,8,16,17,18,15,12.1,21}.
\end{remark}

\textbf{Competing interests.} The authors declare that they have no conflicts of interest in
this joint work.

\textbf{Authors contributions.} All authors contributed equally and significantly in this
paper. All authors read and approved the final manuscript.

\textbf{Acknowledgements.} The authors would like to acknowledge the support from the
National Natural Science Foundation of China (No. 11161024).

\textbf{References}

\end{document}